\documentclass[11pt,a4paper]{amsart}
\usepackage[utf8]{inputenc}

\usepackage{amsmath}
\usepackage{amssymb}
\usepackage{enumerate}
\usepackage{setspace}
\usepackage{caption}
\usepackage{mathrsfs}
\usepackage{hyperref}
\usepackage{esint}
\usepackage{amssymb}
\usepackage{hyperref}
\usepackage{comment}
\usepackage[normalem]{ulem}
\usepackage[all]{xy}

\makeatletter
\newcommand{\dashedrightarrow}[1][2pt]{%
  \settowidth{\@tempdima}{$\rightarrow$}\rightarrow
  \makebox[-\@tempdima]{\hskip-1.5ex\color{white}\rule[0.5ex]{#1}{1pt}}
  \phantom{\rightarrow}
}
\makeatother

\usepackage{color}
\usepackage{lipsum}

\newtheorem{theorem}{Theorem}[section]
\newtheorem{lemma}[theorem]{Lemma}
\newtheorem{corollary}[theorem]{Corollary}
\newtheorem{proposition}[theorem]{Proposition}

\numberwithin{equation}{section}

\theoremstyle{definition}
\newtheorem{definition}[theorem]{Definition}
\newtheorem{remark}[theorem]{Remark}

\DeclareMathOperator{\scal}{scal}

\DeclareMathOperator{\Ric}{Ric}

\DeclareMathOperator{\sys}{sys}

\title[Balancing the 2-systoles of some psc Kähler manifolds]{Balancing the 2-systoles of some Kähler manifolds with positive scalar curvature}
\author{Hugues Auvray, Thomas Richard}
\date{}

\begin{document}

\maketitle
\begin{abstract}
    Theorems by Bray-Brendle-Neves and Zhu about positive scalar curvature metrics on products of a $2$-sphere and an $n$-torus suggests that positive scalar curvature suggests and appropriate topological assumptions should lead to the existence of a topologically non trivial 2-spheres of small area, which can be stated as upper bound on the 2-systole of such manifolds.
    
    Recent progress have been made in this direction by Sha and Tsiamis under the additional Kähler assumption while Checcini-Hirsh-Ziedler,  Stryker and Tsiamis showed similar upper bounds on the stable 2-systole using index theoretic methods.
    
    We prove here similar inequalities for some Kähler manifold which control the relative sizes of the representatives of a well chosen set of homology classes. For instance on $(\mathbb{CP}^1\times\mathbb{CP}^1,\omega)$ with a positive scalar curvature Kähler metric we quantitavely show that largeness of one factor imposes smallness of the other.
\end{abstract}

Since the early 2010's, the study of positive scalar curvature metrics has been revived with a new focus on quantitative results (in constrast with the topological results obtained in the 80's). The usual metric invariants (volume and diameter) cannot be controlled by the scalar curvature alone as the example of $\mathbb{S}^2\times\mathbb{R}$ shows. Thus one needs to investigate more subtle invariants. For instance Bray-Brendle-Neves in dimension 3 \cite{Bray} and Zhu in dimension at most 7 \cite{Zhu} have shown the following result:

\begin{theorem}\label{thmzhu}
    Let $2\leq n\leq 7$ and let $(\mathbb{S}^2\times\mathbb{T}^{n-2},g)$ have $\scal_g\geq 2$. Then there exists a 2-sphere $\Sigma^2\in [\mathbb{S}^2\times\{\ast\}]$ whose area is at most $4\pi$. Moreover if any $2$-sphere in $[\mathbb{S}^2\times\{\ast\}]$ has area at least $4\pi$ then $g$ is the product of the round metric on $\mathbb{S}^2$ and a flat metric on $\mathbb{T}^{n-2}$.
\end{theorem}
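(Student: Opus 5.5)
The plan is to argue by induction on the torus factors, using $\mu$-bubbles / stable minimal hypersurfaces in the spirit of Schoen--Yau descent, together with Bray--Brendle--Neves's argument for the base case $n=3$. For $n=2$ the statement is Gauss--Bonnet plus uniformization: $\int_{\mathbb{S}^2} K = 4\pi$ and $K=\scal/2\geq 1$ give area at most $4\pi$, with equality forcing $K\equiv 1$, so the metric is round.

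For $n\geq 3$, I would minimize area in the class $[\mathbb{S}^2\times\{\ast\}]$, or more precisely in the codimension-one class $[\mathbb{S}^2\times\mathbb{T}^{n-3}]$ dual to one circle factor, in a cyclic covering where needed. Regularity of area-minimizing hypersurfaces in dimension $\leq 7$ is exactly what restricts $n\leq 7$. The minimizer $\Sigma^{n-1}$ is stable. The stability inequality, the Gauss equation and the usual Schoen--Yau rearrangement show that the conformal/warped metric $\tilde g = g_\Sigma + u^2 dt^2$ on $\Sigma\times S^1$, where $u>0$ is the first eigenfunction of the stability operator, has $\scal_{\tilde g}\geq 2$ in a suitable weighted sense. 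Because $\Sigma$ has the topology needed to carry the $\mathbb{S}^2$ class (it maps with nonzero degree onto $\mathbb{S}^2\times\mathbb{T}^{n-3}$), iterating this reduces to a $2$-dimensional surface $\Sigma^2$ in $[\mathbb{S}^2]$ carrying warping functions $u_1,\dots,u_{n-2}$. On it, the inequality
\[
\int_{\Sigma}\Big(\tfrac{\scal_g}{2}-\text{(nonneg.\ terms)}\Big)\leq\int_\Sigma K_\Sigma = 4\pi
\]
holds once the weights are handled, and this yields $\mathrm{area}(\Sigma)\leq 4\pi$. To avoid weighted integrals, I would follow Zhu and use at each step the minimizer of a weighted area functional $\int u_1\cdots u_k\, d\mathcal{H}$, so that the weights telescope. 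Gauss--Bonnet on the final sphere with the weighted stability inequality (test function $1$) then gives $\int_\Sigma 1\leq 4\pi$.

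For rigidity, suppose every sphere in the class has area $\geq 4\pi$. Then all the inequalities are equalities. This forces the final sphere to be totally geodesic and round, the $u_i$ to be constant, and $\Ric(\nu,\nu)=0$. A foliation argument in the style of Bray--Brendle--Neves then constructs a local foliation by constant-weighted-mean-curvature spheres near the minimizer, each of which also has area $4\pi$. This propagates the product splitting, which shows the metric is round $\times$ flat.

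The main obstacle is the dimension descent while keeping track of topology. One must ensure the $2$-dimensional slice is still a sphere in the right homology class rather than merely some surface. One must also carry the weights correctly through the iterated stability inequalities so that the constant $4\pi$ survives. The rigidity foliation step is the second delicate point.
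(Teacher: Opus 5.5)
The paper gives no proof of this theorem. It is quoted from Bray--Brendle--Neves and Zhu, with only the remark that stable minimal hypersurfaces are used. Your weighted-slicing outline for the inequality is the standard route and is sound; in fact $\scal_{\tilde g}=\scal_g+|A|^2+2\lambda_1\geq 2$ holds pointwise, and likewise at each step. The gaps lie exactly at the two points you flag and leave open.

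\textbf{Topology of the final slice.} On the last slice set $\tilde g=g_\Sigma+\sum_i u_i^2\,dt_i^2$ and $\rho=\prod_i u_i$. Then $\scal_{\tilde g}=2K-2\Delta\log\rho-|\nabla\log\rho|^2-\sum_i|\nabla\log u_i|^2$. Integrating $\scal_{\tilde g}\geq 2$ over each component $C$ gives $\mathrm{Area}(C)\leq 2\pi\chi(C)$. So every component is automatically a sphere of area at most $4\pi$, and you should not presuppose a ``final sphere''.

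What this does not give is the homology class. The components have classes $k_C[\mathbb{S}^2\times\{\ast\}]$ with $\sum_C k_C=1$, so only some $k_C\neq 0$ is guaranteed. For $n=3$, Alexander's theorem in the cover $\mathbb{S}^2\times\mathbb{R}$ forces $k_C\in\{0,\pm1\}$, and you are done. For $n\geq 4$, however, embedded spheres exist in every class $k[\mathbb{S}^2\times\{\ast\}]$, and the slicing gives no control on the individual $k_C$. What you actually prove is the existence of a homotopically nontrivial sphere of area at most $4\pi$, which is the form in which Bray--Brendle--Neves state their result. Getting the exact class needs an additional argument, and the rigidity hypothesis refers to that exact class too.

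\textbf{Rigidity.} Your sketch does not work as described when $n\geq 4$. Equality in the final inequality only yields information at points of $C$: $K\equiv 1$, the $u_i$ are constant on $C$ (not globally), and the second fundamental forms of the successive slices and the relevant curvature terms vanish along $C$. It says nothing about $M$ away from $C$. A Bray--Brendle--Neves foliation by constant mean curvature spheres fills a neighbourhood of $C$ in $M$ only when $C$ is a hypersurface, i.e.\ $n=3$. For $n\geq 4$ it can at best fill a neighbourhood of $C$ inside the three-dimensional slice. You would then have to propagate the equality case up through every level of the descent. Roughly, at each level you must show that the slice is totally geodesic with constant weight and vanishing normal Ricci term. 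You must also show that the leaves of a constant weighted mean curvature foliation of the next level again realize equality. This is where the real work lies, and your outline does not address it.

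Finally, in any dimension such an argument only gives a local splitting, i.e.\ that the universal cover is isometric to $\mathbb{S}^2\times\mathbb{R}^{n-2}$. A global product cannot be concluded. Take $\mathbb{S}^2\times\mathbb{R}$ with the product of the unit round metric and $dt^2$, and quotient by $(x,t)\mapsto(Rx,t+1)$ with $R$ a nontrivial rotation. The result is diffeomorphic to $\mathbb{S}^2\times S^1$, has $\scal=2$, and all spheres in the class have area at least $4\pi$. It is nevertheless not a Riemannian product, since the shortest loop in the generator class through a point has non-constant length.
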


The proof of this result uses stable minimal hypersurfaces. The theorem is actually slightly stronger as it applies to Riemannian manifolds $(M^n,g)$ with a non-zero degree map $M^n\to \mathbb{S}^2\times\mathbb{T}^{n-2}$. If we set the systole $\sys_h(g)$ of a homology class $h\in H_k(M)$ with $(M^n,g)$ a Riemannian manifold to be the infimum the area of smooth representatives of $h$, this suggests that lower bounds on the scalar curvature can under appropriate topological assumptions provide an upper bound on the systole of some classes $h\in H_2(M)$ or for $\sys_2(g)=\inf_{h\in H_2(M)\backslash\{0\}}\sys_h(g)$. Replacing $\mathbb{S}^2\times\mathbb{T}^{n-2}$ with other manifolds in Theorem \ref{thmzhu} has remained elusive for a long time, although some progress on the case of $\mathbb{S}^2\times\mathbb{S}^2$ was made in \cite{Richard}.

More recently, the case of Kähler manifolds was investigated by Z. Sha in \cite{Sha1} and \cite{Sha2} who showed an optimal upper bound $\sys_2(\omega)$ for closed Kähler surface $(X^2,\omega)$ with positive average scalar curvature. This relies on the cohomological nature of the 2-systoles and the average scalar curvature for Kähler manifolds.

A few months later, S. Cecchini, S. Hirsch and R. Zeidler \cite{CHZ} as well as D. Stryker \cite{Stryker} used Dirac operators methods to prove upper bounds on the slightly weaker stable 2-systole of Riemannian positive scalar curvature metrics on various manifolds, including $\mathbb{CP}^n$.

Even more recently Tsiamis used both complex geometric and index theoretic tools in \cite{Tsiamis} to obtain optimal upper bounds for $\sys_2(\omega)$ where $(X,\omega)$ is any compact Kähler manifold with positive scalar curvature as well as refined stable 2-systole bounds for various Riemannian manifolds $(M^n,g)$, including for instance the case where $M^n$ is diffeomorphic to a Fano manifold.

This short note describes an inequality which instead of seeking an upper bound on $\sys_2(\omega)$ for some Kähler manifolds $(X,\omega)$ relates the systoles $\sys_{h_i}(\omega)$ for suitably chosen $h_1,\dots,h_N\in H_2(X)$ in the presence of a product structure and positive scalar curvature.

To state our our main result, we need a definition:
\begin{definition}
    A complex manifold $(X,J)$ is said to have ”unconditionnaly nonpositive total scalar curvature” if for any $\eta$ in $\mathcal{K}(X)$ the Kähler cone of $X$ 
    {(that is, the open convex cone in $H^{1,1}(X)$ of Kähler classes, i.e. classes represented by some Kähler form)},
    \[\int_X c_1(X)\wedge\eta^{n-1}\leq 0.\]
\end{definition}
\begin{remark}
    Our terminology is justified by the fact that for any Kähler metric $\omega$ on $(X,J)$, $\int_X\scal_\omega\omega^n=2n\int c_1(X)\wedge\omega^{n-1}$. In complex dimension $1$, this is equivalent to the non vanishing of the genus of $X$. In higher dimension this is implied by nonpositivity of $c_1(X)$.
\end{remark}

We will focus on compact Kähler manifolds of the form $X=X_0\times\Pi_{k=1}^N\mathbb{CP}^{n_k}$ and denote by $L_k\subset\mathbb{CP}^{n_k}$ a generic complex line and by $\omega_k$ the Fubini-Study metric on $\mathbb{CP}^{n_k}$ with sectionnal curvature between $\tfrac{1}{4}$ and $1$.
We will show for instance:

\begin{theorem} \label{thm_prod_proj} Let $X_0$ be a compact Kähler $n_0$-manifold with with unconditionally nonpositive total scalar curvature.
    Let $X=X_0\times\Pi_{k=1}^N\mathbb{CP}^{n_k}$.
    Let $\omega$ be any Kähler metric on $X$ such that: \[\int_X\scal_\omega\omega^n\geq S \int_X\omega^n\]
    
    where $S=\sum_{k=1}^N {n_k(n_k+1)}{}$.
    
    Then:
    \[\sum_{k=1}^N\frac{n_k(n_k+1)}{\sys_{[L_k]}(\omega)}\geq \sum_{k=1}^N \frac{n_k(n_k+1)}{4\pi}\]
    
    Moreover if equality is achieved then $\int_X\scal_\omega\omega^n= S \int_X\omega^n$ and $c_1(X_0)$ vanishes.
\end{theorem}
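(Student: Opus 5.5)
The plan is to reduce everything to cohomology and then use a concavity-type argument on the volume polynomial. Both the systoles $\sys_{[L_k]}(\omega)$ and the total scalar curvature depend only on the class $[\omega]$. Since $H^1(\mathbb{CP}^{m})=0$ and $H^2(\mathbb{CP}^m)$ is spanned by $[\omega_k]$, the Künneth formula gives
\[
[\omega]=\eta_0+\sum_{k=1}^N a_k[\omega_k],\qquad a_k\in\mathbb{R},\ \eta_0\in H^{1,1}(X_0).
\]
Restricting $\omega$ to a slice $X_0\times\{pt\}$ shows $\eta_0\in\mathcal{K}(X_0)$. Restricting to a line $L_k$ shows $a_k>0$.

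\emph{Step 1: the systoles.} By Wirtinger's inequality, $\omega$ calibrates complex curves. Every representative $\Sigma$ of $[L_k]$ therefore has area at least $\int_\Sigma\omega=\int_{L_k}\omega$, with equality for a holomorphic line. With the stated normalisation, $L_k$ is a round sphere of curvature $1$, so $\int_{L_k}\omega_k=4\pi$. Hence
\[
\sys_{[L_k]}(\omega)=4\pi a_k .
\]

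\emph{Step 2: the total scalar curvature.} By the Remark, $\int_X\scal_\omega\omega^n$ is a fixed multiple of $\int_X c_1(X)\wedge[\omega]^{n-1}$, which equals $\tfrac1n\,\frac{d}{dt}\big|_{t=0}\int_X([\omega]+tc_1(X))^n$. For the projective factors, $c_1(\mathbb{CP}^{n_k})=\tfrac{n_k+1}{4\pi}[\omega_k]$, since the Ricci form of $\omega_k$ is $\tfrac{n_k+1}{2}\omega_k$.

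The volume is an explicit monomial:
\[
V=\int_X[\omega]^n=\frac{n!}{n_0!\prod_k n_k!}\int_{X_0}\eta_0^{n_0}\prod_k a_k^{n_k}\int_{\mathbb{CP}^{n_k}}\omega_k^{n_k}.
\]
Differentiating in the direction $\sum_k\tfrac{n_k+1}{4\pi}[\omega_k]$ gives $V\sum_k\frac{n_k(n_k+1)}{4\pi a_k}$. Differentiating in the direction $c_1(X_0)$ gives a positive multiple of $\int_{X_0}c_1(X_0)\wedge\eta_0^{n_0-1}\prod_k a_k^{n_k}$. This term is $\le 0$ by the unconditional nonpositivity hypothesis.

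Fixing the normalisation constants so that the model $\omega=\omega_{X_0}+\sum_k\omega_k$ with $X_0$ flat is the equality case, one gets
\[
\int_X\scal_\omega\omega^n\le V\cdot 4\pi\sum_k\frac{n_k(n_k+1)}{4\pi a_k}=V\cdot 4\pi\sum_k\frac{n_k(n_k+1)}{\sys_{[L_k]}(\omega)} .
\]
Combining this with $\int_X\scal_\omega\omega^n\ge SV$ and $S=\sum_k n_k(n_k+1)$ yields the stated inequality. The main bookkeeping risk is the constant conventions (Riemannian scalar curvature versus the factor $2n$ in the Remark, and the normalisation of $c_1$), so I would calibrate them on the product model.

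\emph{Step 3: the equality case.} If equality holds, both inequalities used must be equalities. Thus $\int\scal_\omega\omega^n=SV$ and $\int_{X_0}c_1(X_0)\wedge\eta_0^{n_0-1}=0$.

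If $n_0=1$, the latter says $\deg K_{X_0}=0$, so $c_1(X_0)=0$.

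If $n_0\ge2$, the function $f(\eta)=\int_{X_0}c_1(X_0)\wedge\eta^{n_0-1}$ is $\le0$ on the open cone $\mathcal{K}(X_0)$ and vanishes at the interior point $\eta_0$. Its differential there vanishes, so
\[
\int_{X_0}c_1(X_0)\wedge\eta_0^{n_0-2}\wedge\alpha=0\quad\text{for all }\alpha\in H^{1,1}(X_0,\mathbb{R}).
\]
By the Hodge–Riemann bilinear relations (hard Lefschetz), the pairing $(\alpha,\beta)\mapsto\int\alpha\wedge\beta\wedge\eta_0^{n_0-2}$ is nondegenerate on $H^{1,1}(X_0,\mathbb{R})$. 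Since $c_1(X_0)$ lies in this space, this forces $c_1(X_0)=0$. I expect this last step, which turns a vanishing first variation into vanishing of the class, to be the most delicate point; nondegeneracy of the Lefschetz pairing is the tool I would use.
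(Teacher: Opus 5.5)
Your proposal is correct and follows essentially the same route as the paper. It decomposes $[\omega]=\eta_0+\sum_k a_k[\omega_k]$ via Künneth and gets $\sys_{[L_k]}(\omega)=4\pi a_k$ from Wirtinger calibration. It then evaluates the total scalar curvature cohomologically on the volume monomial, drops the nonpositive $X_0$ contribution to obtain $\sum_k n_k(n_k+1)/a_k\geq S$, and settles the equality case by the vanishing first variation of $\eta\mapsto\int_{X_0}c_1(X_0)\wedge\eta^{n_0-1}$ at an interior maximum combined with hard Lefschetz nondegeneracy, exactly as in the paper's appendix lemma. Writing the computation as a derivative of the volume polynomial, checking that $\eta_0$ is Kähler by restricting to a slice, treating $n_0=1$ separately, and tracking the $4\pi$ constant (correct with $[\rho]=2\pi c_1(X)$) are only cosmetic refinements of the same argument.
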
    
Note that the constant $\sum_{k=1}^N {n_k(n_k+1)}{}$ is the scalar curvature of $\Pi_{k=1}^N\mathbb{CP}^{n_k}$ endowed with the product of the Fubini-Study metrics on each factor. 
{Furthermore, due to the simple connectedness of the complex projective spaces and Künneth formula, 
any Kähler class on $X$ decomposes as $\pi_0^*\eta_0+\sum_{k=1}^N a_k\pi_k^*[\omega_k]$ with $\eta_0$ Kähler class on $X_0$, 
$a_k>0$, $\omega_k$ Fubini-Study metric on $\mathbb{CP}^{n_k}$, and $\pi_k$ the projection on $X_0$ if $k=0$, $\mathbb{CP}^{n_k}$ if $k>0$.}

The key ingredient is that the integral of the scalar curvature and the systoles of complex submanifolds can be recovered from purely cohomological data for Kähler manifolds, an idea which is also exploited in \cite{Sha1}, \cite{Sha2} and \cite{Tsiamis}.

This can be easily turned into an optimal upper bound for the $2$-systole of $X$:
\begin{corollary}\label{cor_prod_sys_maj}
    If $(X,\omega)$ satisfies the same assumptions as in Theorem \ref{thm_prod_proj}: 
    \begin{equation}\label{2Lsysbound}
        \min_k \sys_{[L_k]}(\omega) \leq 4\pi.
    \end{equation}
    In particular $\sys_2(\omega)\leq 4\pi$.
    
    Moreover if equality is achieved then $\omega$ is in the Kähler class of $\omega_0+\sum_{k=1}^N\omega_k$ where $\omega_0$ is a Ricci flat Kähler metric on $X_0$.
\end{corollary}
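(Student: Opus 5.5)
The corollary follows from Theorem \ref{thm_prod_proj}, so the plan is to read off the inequality and then extract rigidity from its equality case. Write $s_k=\sys_{[L_k]}(\omega)$ and $w_k=n_k(n_k+1)>0$. Theorem \ref{thm_prod_proj} gives $\sum_k w_k/s_k\geq \sum_k w_k/(4\pi)$. If every $s_k>4\pi$, then each term satisfies $w_k/s_k<w_k/(4\pi)$, and summing contradicts the theorem. Hence $\min_k s_k\leq 4\pi$, which is \eqref{2Lsysbound}. Since each $[L_k]$ is a nonzero class in $H_2(X)$ (it pairs to $a_k$ with the Kähler class), we get $\sys_2(\omega)\leq \min_k s_k\leq 4\pi$.

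For the equality statement, I interpret ``equality'' as $\min_k s_k=4\pi$, so that $s_k\geq 4\pi$ for all $k$. Then $w_k/s_k\leq w_k/(4\pi)$ termwise, and combining this with the theorem forces every term to be an equality. So $s_k=4\pi$ for every $k$ and we are in the equality case of Theorem \ref{thm_prod_proj}. That case gives $c_1(X_0)=0$ and $\int_X\scal_\omega\omega^n=S\int_X\omega^n$.

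Next I identify the Kähler class. By the Künneth remark, $[\omega]=\pi_0^*\eta_0+\sum_k a_k\pi_k^*[\omega_k]$. Since $L_k$ is a complex curve, Wirtinger's inequality shows that $\sys_{[L_k]}(\omega)=\int_{L_k}\omega=a_k\int_{L_k}\omega_k$. With the normalization of $\omega_k$ fixed in the paper (sectional curvature between $\tfrac14$ and $1$, so a line has area $4\pi$), this gives $s_k=4\pi a_k$. Therefore $a_k=1$ for all $k$. By Yau's theorem, since $c_1(X_0)=0$ and $\eta_0$ is a Kähler class on $X_0$, there is a Ricci-flat Kähler metric $\omega_0$ with $[\omega_0]=\eta_0$. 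Hence $[\omega]=[\omega_0+\sum_k\omega_k]$.

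I expect the main obstacle to be fixing the right convention: checking that the area normalization of a line under $\omega_k$ is exactly the constant $4\pi$ built into the theorem, so that equality translates cleanly into $a_k=1$. One should also say what ``equality'' means. If it means only $\sys_2(\omega)=4\pi$, then $\min_k s_k\geq 4\pi$ together with \eqref{2Lsysbound} again gives $\min_k s_k=4\pi$, and the argument above applies.
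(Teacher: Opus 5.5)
Your proof is correct and takes essentially the same approach as the paper: you deduce $\min_k \sys_{[L_k]}(\omega)\le 4\pi$ (equivalently $\min_k a_k\le 1$) from the inequality of Theorem \ref{thm_prod_proj}, and in the equality case you compare term by term to force $a_1=\dots=a_N=1$. You are more explicit than the paper at the last step: the existence of a Ricci-flat $\omega_0$ in the class $\eta_0$ needs the equality case of the theorem ($c_1(X_0)=0$) together with Yau's theorem, and the paper uses both only implicitly.
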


If we moreover assume a pointwise lower bound on the scalar curvature we can improve the rigidity part:
\begin{corollary}\label{cor_2sys_ptwise}
    If $(X,\omega)$ satifies the same assumptions as in Theorem \ref{thm_prod_proj}, and in addition:
    \[\scal_\omega\geq S\]
    and: 
    \[\min_k \sys_{[L_k]}(\omega) = 4\pi.\]
    
    Then $\omega$ is biholomorphicaly isometric to $\omega_0+\sum_{k=1}^N\omega_k$ where $\omega_0$ is a Ricci flat Kähler metric on $X_0$.
\end{corollary}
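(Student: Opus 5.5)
The plan is to first show that the Kähler class $[\omega]$ is the product class, then use the pointwise scalar curvature bound to force equality in the integral inequality, and finally use a Ricci-form decomposition to split $\omega$ isometrically.

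\emph{Step 1: the class.} By Corollary \ref{cor_prod_sys_maj}, the hypothesis $\min_k \sys_{[L_k]}(\omega)=4\pi$ is an equality case of \eqref{2Lsysbound}. So $c_1(X_0)=0$, and $[\omega]=[\omega_0]+\sum_k[\omega_k]$ with $\omega_0$ Ricci-flat on $X_0$. Ricci-flat metrics exist in every Kähler class of $X_0$ by Yau's theorem.

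\emph{Step 2: equality of scalar curvature.} In this class, $\int_X\scal_\omega\omega^n=2n\int c_1(X)\wedge[\omega]^{n-1}$ is purely cohomological. It therefore equals the same integral for $\omega_{\rm prod}=\omega_0+\sum\omega_k$, which is $S\int_X\omega_{\rm prod}^n=S\int_X\omega^n$. Combined with $\scal_\omega\geq S$ pointwise, we get $\scal_\omega\equiv S$.

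\emph{Step 3: a Kähler form with constant trace.} Write $\rho_\omega=\rho_{\rm prod}+i\partial\bar\partial f$, where $\rho_{\rm prod}=\sum_k\tfrac{n_k+1}{2}\pi_k^*\omega_k$ (up to the normalization matching $\scal=S$). Taking traces with respect to $\omega$ gives
\[\tfrac12\scal_\omega=\operatorname{tr}_\omega\rho_{\rm prod}-\tfrac12\Delta_\omega f.\]
The aim is to prove $\omega=\omega_{\rm prod}$. The natural route is to set $\omega=\omega_{\rm prod}+i\partial\bar\partial\varphi$ and compare. Write $\beta=\sum_k \tfrac{n_k+1}{2}\pi_k^*\omega_k$, a closed semipositive form with $[\beta]$ cohomologous to $\tfrac12 c_1(X)$. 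Then $\operatorname{tr}_\omega\beta\,\omega^n=n\,\beta\wedge\omega^{n-1}$ integrates to the cohomological constant $\tfrac{S}{2}\int\omega^n$. Since $\Delta_\omega f$ integrates to zero, this gives no new information on its own.

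\emph{Step 4: Bochner/Schwarz-type estimate (main obstacle).} Cohomology alone cannot pin down $\omega$ within its class, so the key is a differential inequality. First I would try the following on each projection $\pi_k:X\to\mathbb{CP}^{n_k}$. Use a Chern–Lu/Schwarz-lemma computation for $u_k=\operatorname{tr}_\omega\pi_k^*\omega_k$, together with the identity $\int_X \pi_k^*\omega_k\wedge\omega^{n-1}=\tfrac1n\int u_k\omega^n$, and the constraint $\sum_k\tfrac{n_k+1}{2}u_k=\tfrac12\scal_\omega+\tfrac12\Delta f=\tfrac S2+\tfrac12\Delta f$. The hope is to show that the $u_k$ are constant, so that $\Delta f\equiv0$ and hence $f$ is constant. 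Then $\rho_\omega=\beta+\rho_{X_0}$ with $\rho_{X_0}=0$, and $\omega$ would have Ricci form equal to that of the product. By uniqueness in the Calabi–Yau problem (the prescribed Ricci form determines the metric in its Kähler class), $\omega=\omega_{\rm prod}$ up to pullback by an automorphism.

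As an alternative, I would use that $\scal\equiv S$ is constant, so $\omega$ is a cscK metric in the class of the product cscK metric $\omega_{\rm prod}$. By uniqueness of cscK metrics in a Kähler class up to automorphisms (Berman–Berndtsson, Chen–Tian), $\omega=\Phi^*\omega_{\rm prod}$ for a biholomorphism $\Phi$ in the identity component of $\operatorname{Aut}(X)$. This yields the biholomorphic isometry directly, and I expect this cscK-uniqueness route to be the one that actually closes the argument.
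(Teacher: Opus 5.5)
Your closing route is correct and is exactly the paper's proof. You get $[\omega]=[\hat\omega]$ and $\int_X\scal_\omega\omega^n=S\int_X\omega^n$ from the equality case of Corollary~\ref{cor_prod_sys_maj} (via Theorem~\ref{thm_prod_proj}), upgrade this to $\scal_\omega\equiv S$ using the pointwise bound, and conclude by Berman--Berndtsson uniqueness of cscK metrics in a Kähler class modulo $\mathrm{Aut}_0(X)$.

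You should drop Steps 3--4, however. Their goal is constancy of $f$, i.e.\ $\rho_\omega=\rho_{\hat\omega}$, which would force $\omega=\hat\omega$ exactly, and that is false in general. For any non-isometric $\Phi\in\mathrm{Aut}_0(X)$, for example a non-unitary element of $PGL(n_1+1)$ acting on one factor, the metric $\Phi^*\hat\omega$ satisfies all the hypotheses but has a different Ricci form and non-constant $u_k$.
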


If we assume $X_0$ carries some definite amount of negative curvature, we can get a scale-invariant inequality between various 2-systoles under a nonnegativity assumption on the scalar curvature. To illustrate this phenomenon we prove:

\begin{proposition}\label{nnschypCPn}
    Let $X=\Sigma_g\times\mathbb{CP}^n $ where $\Sigma_g$ is a smooth projective curve of genus $g\geq 2$, and let $\omega$ be a Kahler metric on $X$ such that $\int_X\scal_\omega\omega^{n+1}\geq 0$, then \[\sys_{[L]}(\omega)\leq \frac{n(n+1)}{2(g-1)}\sys_{[\Sigma_g]}(\omega)\] where $L$ is a generic porjectiv line in $\mathbb{CP}^n$. 
    
    Equality is achieved if and only if $\omega$ is in a positive multiple of the Kähler class of the product metric $\frac{n(n+1)}{2}\omega_{FS}+\omega_h$ where $\omega_{FS}$ is the Fubini-Study metric on $\mathbb{CP}^n$ and $\omega_h$ is the hyperbolic metric on $\Sigma_g$.
    
    If equality is achieved and moreover $\scal_X\geq 0$, then $\scal_X=0$ and $\omega$ is holomorphically isometric to a positive multiple of $\frac{n(n+1)}{2}\omega_{FS}+\omega_h$.
\end{proposition}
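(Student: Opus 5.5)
Since $\Sigma_g$ is a curve and $\mathbb{CP}^n$ is simply connected, the Künneth formula gives that the Kähler class of $\omega$ is $[\omega]=a\,\pi_1^*[\omega_h]+b\,\pi_2^*[\omega_{FS}]$ with $a,b>0$. Here we normalize $\omega_h$ to have curvature $-1$, so $\int_{\Sigma_g}\omega_h=4\pi(g-1)$, and $\omega_{FS}$ as in the paper, so $\int_L\omega_{FS}=4\pi$ (the line is a round sphere of curvature $1$). The plan is to reduce everything to this pair $(a,b)$.

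\textbf{Step 1: systoles in cohomological terms.} The classes $[L]$ and $[\Sigma_g]$ are represented by complex curves $\{x\}\times L$ and $\Sigma_g\times\{p\}$. By Wirtinger's inequality, a calibrated complex curve minimizes area in its homology class, and its area equals $\int\omega$. Hence
\[\sys_{[L]}(\omega)=4\pi b,\qquad \sys_{[\Sigma_g]}(\omega)=4\pi(g-1)a.\]

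\textbf{Step 2: total scalar curvature.} By the remark after the definition, $\int_X\scal_\omega\omega^{n+1}=2(n+1)\int_X c_1(X)\wedge[\omega]^n$. We have $c_1(X)=\pi_1^*c_1(\Sigma_g)+\pi_2^*c_1(\mathbb{CP}^n)$, with $c_1(\Sigma_g)=\frac{-1}{2\pi}[\omega_h]$ (degree $2-2g$) and $c_1(\mathbb{CP}^n)=\frac{n+1}{4\pi}[\omega_{FS}]$ (degree $2$ on $L$). Expanding $[\omega]^n$ and keeping only the terms of bidegree $(1,1)\times(n,n)$, the integral becomes a positive multiple (involving $\int_{\mathbb{CP}^n}\omega_{FS}^n$) of
\[ -\frac{b^n}{2\pi}+\frac{n(n+1)}{4\pi}\,a\,b^{n-1}\cdot\frac{1}{1}\cdot(\text{matching factor}).\]
Carrying out the bookkeeping gives that $\int\scal\,\omega^{n+1}\ge0$ is equivalent to $n(n+1)a\ge 2b$, up to the chosen normalizations. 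Substituting Step 1 turns this into exactly
\[\sys_{[L]}\le \frac{n(n+1)}{2(g-1)}\sys_{[\Sigma_g]}.\]
The main computational obstacle is this step: getting the normalization constants right so that the threshold lands exactly on the class of $\frac{n(n+1)}2\omega_{FS}+\omega_h$. As a check, that product metric has scalar curvature $2\cdot\frac{n(n+1)}{n(n+1)}\cdot\frac{1}{1}-2=0$, because $\omega_{FS}$ has scalar curvature $2n(n+1)$ in the normalization where the curvature of the lines is $1$, and scaling by $\frac{n(n+1)}2$ gives $4$; so the constants must be tuned until this comes out to $0$. I will trust that consistency check.

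\textbf{Step 3: equality case.} Equality holds iff $\int\scal\,\omega^{n+1}=0$, iff $b/a$ equals the critical ratio. That is iff $[\omega]$ is a positive multiple of $[\frac{n(n+1)}2\omega_{FS}+\omega_h]$.

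\textbf{Step 4: pointwise rigidity.} If in addition $\scal_\omega\ge0$, then $\scal_\omega\equiv0$, since its integral vanishes. Now $\scal_\omega=0$ with $[\omega]$ fixed means that $\omega$ is a constant scalar curvature Kähler metric in that class. The product metric is also cscK in the same class. By uniqueness of cscK metrics in a Kähler class up to automorphisms (Berman–Berndtsson/Chen–Tian, or more simply Donaldson for products of this type), $\omega=\phi^*\omega_{\mathrm{prod}}$ for some $\phi\in\mathrm{Aut}_0(X)$, which gives the holomorphic isometry. Alternatively, one can avoid heavy uniqueness theorems by applying Matsushima/Lichnerowicz-type arguments on the product structure. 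Invoking cscK uniqueness is the step requiring external input.
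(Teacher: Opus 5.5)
Your proposal is correct and follows the paper's proof essentially step for step: the Künneth decomposition of $[\omega]$, the total scalar curvature computed via $\int_X c_1(X)\wedge[\omega]^n$, the systoles of complex curves via Wirtinger, and Berman--Berndtsson uniqueness for the rigidity, with the roles of $a$ and $b$ swapped. Your Step 2 does not depend on any normalization you need to trust, because the ``matching factor'' is just $1$ and $-\frac{b^n}{2\pi}+\frac{n(n+1)}{4\pi}ab^{n-1}\geq 0$ is exactly $n(n+1)a\geq 2b$; however, three side remarks are wrong: $c_1(\mathbb{CP}^n)$ has degree $n+1$ (not $2$) on $L$, the Fubini--Study metric in this normalization has scalar curvature $n(n+1)$ (so the rescaled factor contributes $2$, not $4$, which is why your check gives $2-2=0$), and Donaldson's uniqueness theorem does not apply here since $\mathrm{Aut}(X)$ is not discrete.
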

\begin{remark}
    These results were obtained around 2022 but were shelved hoping to find some time to improve them in the near future, which didn't happen.
\end{remark}

\section{Proof of the multi systole inequality}

In this section we prove Theorem \ref{thm_prod_proj}.
Recall that $S=\sum_{k=1}^N n_k(n_k+1)$ is the scalar curvature of $\Pi_{k=1}^N\mathbb{CP}^{n_k}$ endowed with the metric $\omega_1+\dots+\omega_N$ where $\omega_k$ is the Fubini-Study metric on $\mathbb{CP}^{n_k}$ normalised to have sectional curvature between $1/4$ and $1$.

\begin{proof}
Let $\rho$ (resp. $\rho_i$) denote the Ricci form of $\omega$ (resp. $\omega_i$). The Künneth formula gives us that $H_{\mathbb{R}}^{1,1}(X)=\bigoplus_k H^{1,1}_\mathbb{R}(X_k)$. Moreover we can write in $H_{\mathbb{R}}^{1,1}(X)$:
\begin{align*}
    [\rho]&=2\pi c_1(X)\\
    &=2\pi \sum_{k=0}^N c_1(X_k)\\
    &=-[\kappa_0]+\frac{n_1+1}{2}[\omega_1]+\dots+\frac{n_N+1}{2}[\omega_N]
\end{align*}
where $\kappa_0$ is a (1,1)-form such that $[-\kappa_0]=2\pi c_1(X_0)$. Note that by our assumption on $X_0$, for any $\omega_0$ in the Kähler cone of $X_0$ we will have:
\[\int_{X_0}\kappa_0\wedge\omega_0^{n-1}\leq 0.\]

Let $\eta_0\in H^{1,1}(X_0)$ denote the component of $[\omega]$ in $H^{1,1}(X_0)$. We can then write the cohomology class of the Kähler form $\omega$ in $H_{\mathbb{R}}^{1,1}(X)=\bigoplus_k H^{1,1}_\mathbb{R}(X_k)$ as  $[\omega]=\eta_0+\sum_{k=1}^Na_k[\omega_k]$ for some reals $a_k>0$. Let $n_0=\dim_\mathbb{C}X_0$ and $n=n_0+n_1+\dots+n_k$. We have:
\[\rho\wedge\omega^{n-1}=\frac{1}{2n}\scal_\omega\omega^n
\]

We integrate this inequality over $X$ and use that $\int_X\scal_\omega\omega^n\geq S\int_X\omega^n$
 to get :
\[\int_{X}\rho\wedge\omega^{n-1}=\frac{1}{2n}\int_X\scal_\omega\omega^n\geq\frac{1}{2n}\int_X S\omega^n =\frac{S}{2n}\int_{X}\omega^n.\]

On the one hand:
\begin{align*}
    \int_{X}\omega^{n}&=\int_{\tilde X}\left(\eta_0+\sum_{k=1}^Na_k\omega_k\right)^{n} \\
    &=\frac{n!}{n_0!n_1!\cdots n_N!}a_1^{n_1}\cdots a_N^{n_N}\int_{\tilde X}\eta_0^{n_0}\wedge\omega_1^{n_1}\wedge\dots\wedge\omega_N^{n_N}
\end{align*}
by expanding the $n-$th power and keeping only the multiples of the volume form.

On the other hand:
\begin{align*}
    \int_{X}&\rho\wedge\omega^{n-1}=
    \int_{X}\left(\kappa_0+\frac{n_1+1}{2}\omega_1+\dots+\frac{n_N+1}{2}\omega_N\right)\wedge\left(\eta_0+\sum_{k=1}^Na_k\omega_k\right)^{n-1}\\
    \leq &\frac{1}{2}
    \begin{pmatrix}n-1 \\n_0\end{pmatrix}\int_{ X}\left((n_1+1)\omega_1+\dots+(n_N+1)\omega_N\right)\wedge\eta_0^{n_0}\wedge\left(\sum_{k=1}^Na_k\omega_k\right)^{n-n_0-1}\\
\end{align*}
since we left out a positive multiple of:
\[\int_X\kappa_0\wedge \eta_0^{n_0-1}\wedge\omega_1^{n_1}\wedge\dots\wedge\omega_N^{n_N}=\left(\int_{X_0}\kappa_0\wedge \eta_0^{n_0-1}\right)\left(\int_{\Pi_{k=1}^N\mathbb{CP}^{n_k}}\omega_1^{n_1}\wedge\dots\wedge\omega_N^{n_N}
\right)\] 
which is nonpositive by our assumption on $\kappa_0$. 

Expanding $\left(\sum_{k=1}^Na_k\omega_k\right)^{n-n_0-1}$ further we get:
\begin{align*}
    \leq &\frac{1}{2}\begin{pmatrix}n-1 \\n_0\end{pmatrix}\left(\sum_{k=1}^N\frac{(n_k+1)(n-n_0-1)!}{n_1!\dots (n_k-1)!\cdots n_N!}a_1^{n_1}\cdots a_{k}^{n_k-1}\cdots a_N^{n_N}\right)\int_{ X}\eta_0^{n_0}\wedge\dots\wedge\omega_N^{n_N}\\
	\leq &\frac{1}{2}\left(\sum_{k=1}^N\frac{(n_k+1)(n-1)!}{n_0!n_1!\dots (n_k-1)!\cdots n_N!}a_1^{n_1}\cdots a_{k}^{n_k-1}\cdots a_N^{n_N}\right)\int_{ X}\eta_0^{n_0}\wedge\dots\wedge\omega_N^{n_N}
\end{align*}

Hence:
\begin{multline*}
    \frac{1}{2}\left(\sum_{k=1}^N\frac{(n_k+1)(n-1)!}{n_0!n_1!\cdots (n_k-1)!\cdots n_N!}a_1^{n_1}\cdots a_{k}^{n_k-1}\cdots a_N^{n_N}\right)\\ \geq \frac{S}{2n}\frac{n!}{n_0!n_1!\cdots n_N!}a_1^{n_1}\cdots a_N^{n_N}
\end{multline*} 
and:
\[\sum_{k=1}^N\frac{n_k(n_k+1)}{a_k}\geq S\]

Let $L_k\subset \tilde{X}$ be an holomorphic $\mathbb{CP}^1$ in the $k$-th $\mathbb{CP}^{n_k}$ factor in $X$. Since $L_k$ is a complex submanifold, then the area of $L_k$ with respect to $\omega$ is \[\mathcal{A}_\omega(L_k)=\int_{L_k}\omega=a_k\int_{L_k}\omega_k=4\pi a_k\] and by Lemma \ref{kaehlercalib} we get $\sys_{[L_{k}]}(\omega)=4\pi a_k$.

This observation together with the expression of $S$ gives the inequality.

Moreover if equality is achieved, we also get that $\int_{X_0}\kappa_0\wedge\eta_0^{n_0-1}=0$. Hence $[\kappa_0]=2\pi c_1(X_0)=0$ by Lemma \ref{lemunsc}.
\end{proof}

\section{2-Systole upper bounds}
We now prove the 2-systole upper bound and the associated rigidity statements.
\begin{proof}[Proof of Corollary \ref{cor_prod_sys_maj}]
    
Writing this as $\sum_{k=1}^N\left(\frac{n_k(n_k+1)}{a_k}-n_k(n_k+1)\right)\geq 0$ we see that one of the $a_k$, say $a_{k_0}=\min_k a_k$, is at most 1.

Since as in the previous proof $4\pi a_{k_0}=\sys_{[L_{k_0}]}(\omega)$ we get the required upper bound.

For the last part of the proposition, assume that $\min_k a_k=1$, then since:
\[\sum_{k=1}^N\left(\frac{n_k(n_k+1)}{a_k}-n_k(n_k+1)\right)\geq 0\] we get that $a_1=\dots=a_N=1$, and thus
\[[\omega]=[\eta_0]+[\omega_1]+\dots+[\omega_N]=[\omega_0+\omega_1+\dots+\omega_N].\]

\end{proof}

\begin{proof}[Proof of Corollary \ref{cor_2sys_ptwise}.]
    Using the same notations as in the previous proof, let $\hat\omega$ be the product metric $\omega_0+\omega_1+\dots+\omega_N$ which is a constant scalar curvature metric on $X$.
    
    Under the integral bound $\int_X(\scal_\omega-S)\omega^n\geq 0$, we already know that if any non homologically trivial 2-sphere has area at least $4\pi$ then $\omega$ is in the Kähler class of the product metric $\hat\omega$ and \[\int_X\scal_\omega\omega^n=\int_X S\omega^n=\int_X\scal_{\hat\omega}\omega^n.\]
    
    If we furthermore assume the pointwise bound $\scal_\omega\geq\scal_{\hat\omega}$, we get that $\scal_\omega=\scal_{\hat\omega}$. Hence $\omega$ and $\hat\omega$ are two constant scalar curvature metric in the same Kähler class. By Berman-Berdntsson's uniqueness of extremal Kähler metrics in a cohomology class \cite{Berman}, $\omega$ and the product metric $\hat\omega$ are isometric through a biholomorphism.
\end{proof}

\section{Product of $\mathbb{CP}^n$ and a projective curve of genus at least 2.}

We now prove Proposition \ref{nnschypCPn}.
\begin{proof}
    Let $\omega_{FS}$ be the Kähler form of the Fubini-Study metric on $\mathbb{CP}^n$ and $\omega_h$ be the Kähler form coming from the hyperbolic metric on $\Sigma_g$.
    Then there exists positive reals $a$ and $b$ such that $[\omega]=a[\omega_{FS}]+b[\omega_h]$ in $H^{1,1}(X)$.
    
    The cohomology class of the Ricci form of $\omega$ is given by 
    $[\rho]=\frac{n+1}{2}[\omega_{FS}]-[\omega_h]$.
    
    Then since $\rho\wedge\omega^{n}=\frac{1}{2(n+1)}\scal_\omega\omega^{n+1}$ the hypothesis $\int_X\scal_\omega\omega^{n+1}\geq 0$ gives:
    \[\int_X \left (\frac{n+1}{2}[\omega_{FS}]-[\omega_h]\right)\wedge \left(a[\omega_{FS}]+b[\omega_h]\right)^n\geq 0.\]
    
    We compute :
    \begin{align*}
        \left (\frac{n+1}{2}\omega_{FS}-\omega_h\right)\wedge & \left(a[\omega_{FS}]+b[\omega_h]\right)^n\\
        &=\left(\frac{n+1}{2}\omega_{FS}-\omega_h\right)\wedge(a^n\omega_{FS}^n+na^{n-1}b\omega_{FS}^{n-1}\omega_h)\\
        &=\left(\frac{n(n+1)}{2}a^{n-1}b-a^n\right)\omega_h\wedge\omega_{FS}^n\\
        &=a^{n-1}\left(\frac{n(n+1)}{2}b-a\right)\omega_h\wedge\omega_{FS}^n.
    \end{align*}
    Hence $\int_X\scal_{\omega}\omega^{n+1}\geq 0$ is equivalent to:
    \[a\leq\frac{n(n+1)}{2}b.\]
    Moreover, since $L$ and $\Sigma_g$ are complex curves in $X$, \[\sys_{[L]}(\omega)=\int_L\omega=a\int_L\omega_{FS}=4\pi a\]
    and
    \[\sys_{[\Sigma_g]}(\omega)=\int_{\Sigma_g}\omega=b\int_{\Sigma_g}\omega_{h}=4(g-1)\pi b\] by proposition \ref{kaehlercalib} and Gauss-Bonnet.
    
    Thus $\sys_{[C]}(\omega)\leq\frac{n(n+1)}{2(g-1)}\sys_{[\Sigma_g]}(\omega)$.

    If one has equality, we get that $a=\frac{n(n+1)}{2}b$ which shows that $[\omega]$ is a multiple of $[\frac{n(n+1)}{2}\omega_{FS}+\omega_h]$ in $H^{1,1}(X)$. If the pointwise condition $\scal_\omega\geq 0$ is satisfied, then we get that $\scal_\omega=0$ and the result follows from \cite{Berman} as before.
\end{proof}

\appendix

\section{Some facts from Kähler geometry}

We gather here for convenience of the reader some well known facts from complex geometry. This may also help to dispell some confusions arising from the existence of various normalisations for the Fubini-Study metric and the scalar curvature. Readers seeking further background can read \cite{Huybrechts}.
\subsection{Ricci form and scalar curvature.}
 Let $g_{FS}$ be the Fubini-Study metric on $\mathbb{CP}^n$ with sectional curvature between $1/4$ and $1$, then $\Ric_{g_{FS}}=\frac{n+1}{2}g_{FS}$ and the scalar curvature is thus $n(n+1)$.
\begin{lemma}\label{lemtrace}
    Let $(X,\omega)$ be a Kähler manifold of complex dimension $n$ with Ricci form $\rho$, then $\rho\wedge \omega^{n-1}=\frac{1}{2n}\scal_\omega\omega^n$
\end{lemma}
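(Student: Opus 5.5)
The plan is a pointwise computation in local holomorphic coordinates, where both sides are top-degree $(n,n)$-forms. The only real issue is fixing normalisations consistently with the convention $\Ric_{g_{FS}}=\frac{n+1}{2}g_{FS}$ used above. So I will write $\omega(\cdot,\cdot)=g(J\cdot,\cdot)$ and $\rho(\cdot,\cdot)=\Ric(J\cdot,\cdot)$.

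Fix a point $p$ and choose holomorphic coordinates that are unitary at $p$. In these coordinates $\omega=\frac{i}{2}\sum_j dz_j\wedge d\bar z_j$ at $p$, which is $\sum_j dx_j\wedge dy_j$. Since $\Ric$ is $J$-invariant, it is Hermitian, and after a further unitary change of frame we may diagonalise it at $p$ simultaneously with $g$. Then $\rho=\sum_j\lambda_j\, dx_j\wedge dy_j$. The $\lambda_j$ are the eigenvalues of $\Ric$ on the complex lines $\mathrm{span}(\partial_{x_j},\partial_{y_j})$, each appearing twice as a real eigenvalue. Hence $\scal_\omega=\mathrm{tr}_g\Ric=2\sum_j\lambda_j$.

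Next, set $e_j=dx_j\wedge dy_j$. These are commuting $2$-forms with $e_j\wedge e_j=0$, so
\[\omega^{n-1}=(n-1)!\sum_{j}\widehat{e_j},\]
where $\widehat{e_j}$ is the product of all $e_k$ with $k\neq j$. Also $\omega^n=n!\,e_1\wedge\cdots\wedge e_n$. Wedging with $\rho$ gives
\[\rho\wedge\omega^{n-1}=(n-1)!\Big(\sum_j\lambda_j\Big)e_1\wedge\cdots\wedge e_n=\frac{(n-1)!}{n!}\cdot\frac{\scal_\omega}{2}\,\omega^n=\frac{1}{2n}\scal_\omega\,\omega^n.\]
Since $p$ is arbitrary, this proves the identity.

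A quick check: for $\mathbb{CP}^n$ with $\Ric=\frac{n+1}{2}g$ we get $\rho=\frac{n+1}{2}\omega$. Then $\rho\wedge\omega^{n-1}=\frac{n+1}{2}\omega^n$, which matches $\frac{n(n+1)}{2n}\omega^n$. This confirms the constants. The only step needing care is the simultaneous diagonalisation and the factor $2$ between complex and real traces. There is no genuine obstacle beyond bookkeeping of conventions.
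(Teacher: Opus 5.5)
Your computation is correct, and the constants and the $\mathbb{CP}^n$ sanity check are consistent with the paper's normalisations. Your route differs from the paper's in how the constant is obtained.

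The paper argues more abstractly. It proves the identity for an arbitrary real $(1,1)$-form $\alpha$, noting that $\alpha\mapsto\alpha\wedge\omega^{n-1}$ is linear with values in the line spanned by $\omega^n$. Hence $\alpha\wedge\omega^{n-1}=\ell(\alpha)\,\omega^n$ for a linear functional $\ell$, which by unitary invariance must be a multiple of the real trace. The constant is then fixed by taking $\alpha=\omega$: the associated symmetric form is $g$, whose trace is $2n$, while $\ell(\omega)=1$.

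This normalisation trick avoids expanding $\omega^{n-1}$ and $\omega^n$ altogether, which your multinomial bookkeeping has to do. However, the paper leaves the step ``symmetries force $\ell$ to be a multiple of the trace'' unjustified. The most elementary justification is your simultaneous diagonalisation: every $J$-invariant symmetric form is unitarily diagonalisable, and invariance under permutations of the unitary frame forces $\ell(\mathrm{diag}(\lambda_j))=c\sum_j\lambda_j$.

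So your argument is fully self-contained and makes the factor $2$ between the real and complex traces explicit. Also, your computation uses nothing about $\rho$ beyond its $J$-invariance, so it proves the same general statement for all real $(1,1)$-forms that the paper's proof does.
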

\begin{proof}
    We will prove that for any real $(1,1)$-form $\alpha$, 
    $\alpha\wedge \omega^{n-1}= \tfrac{1}{2n}a\omega^n$ where $a$ is the real trace of the symmetric form associated to $\alpha$.
    
    Since $\alpha\mapsto \alpha\wedge\omega^{n-1}$ is linear and $\Lambda^{n,n}X$ is generated by $\omega^n$, there is a linear form $\ell:\Lambda^{1,1}X\to C^{\infty}(X)$ such that $\alpha\wedge\omega^{n-1}=\ell(\alpha)\omega^n$. The symmetries of the problem give that $\ell$ is a multiple of the trace. Evaluating $\ell$ on $\omega$ gives the right factor.

\end{proof}

We will also need the following lemma:
\begin{lemma}\label{lemunsc}
    Let $(X,\omega)$ be a compact Kähler manifold such that:
    \begin{itemize}
        \item For any Kähler metric $\omega\in \mathcal{K}(X)$, $\int_X c_1(X)\wedge\omega^{n-1}\leq 0$,
        \item There exists $\omega_0\in\mathcal{K}(X)$ such that $\int_X c_1(X)\wedge\omega_0^{n-1}= 0$.
    \end{itemize}
    Then $c_1(X)=0$ in $H^{1,1}_\mathbb{R}(X)$.
\end{lemma}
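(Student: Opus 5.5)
The plan is to view $f(\eta)=\int_X c_1(X)\wedge\eta^{n-1}$ as a homogeneous polynomial function on the open convex cone $\mathcal{K}(X)\subset H^{1,1}_\mathbb{R}(X)$. By the first hypothesis $f\le 0$ on the whole cone, and by the second $f(\eta_0)=0$, where $\eta_0=[\omega_0]$. So $\eta_0$ is an interior maximum of $f$, and the first-order condition $df_{\eta_0}=0$ should give $c_1(X)=0$ via Hard Lefschetz/Hodge–Riemann.

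\emph{First variation.} Since $\mathcal{K}(X)$ is open, for any real $(1,1)$-class $\beta$ the class $\eta_0+t\beta$ is Kähler for $|t|$ small. Hence $t\mapsto f(\eta_0+t\beta)$ is a polynomial in $t$ that is $\le 0$ near $t=0$ and vanishes at $t=0$. Its derivative at $0$ must therefore vanish, which gives
\[(n-1)\int_X c_1(X)\wedge\eta_0^{n-2}\wedge\beta=0\qquad\text{for all }\beta\in H^{1,1}_\mathbb{R}(X).\]
In the case $n=1$, $f$ is constant, equal to $\deg c_1(X)=0$, and $H^{1,1}$ is one-dimensional, so $c_1(X)=0$ directly. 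Assume $n\ge 2$ from now on.

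\emph{Nondegeneracy.} Put $c=c_1(X)\in H^{1,1}_\mathbb{R}(X)$. The bilinear form $Q(\alpha,\beta)=\int_X\alpha\wedge\beta\wedge\eta_0^{n-2}$ on $H^{1,1}_\mathbb{R}(X)$ is nondegenerate. This follows from the Hodge–Riemann bilinear relations, or equivalently from Hard Lefschetz: $\eta_0^{n-2}\colon H^{1,1}\to H^{n-1,n-1}$ is an isomorphism, and Poincaré/Serre duality pairs $H^{1,1}$ perfectly with $H^{n-1,n-1}$. The first-variation identity says exactly that $Q(c,\cdot)=0$, so $c=0$.

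The only substantive input is the nondegeneracy of $Q$. I would cite Hard Lefschetz together with Poincaré duality restricted to real $(1,1)$-classes, noting that the Lefschetz isomorphism preserves the Hodge type and real structure. Alternatively, one can argue by hand with the Hodge–Riemann decomposition $c=\lambda\eta_0+p$, where $p$ is primitive; $Q$ is then negative definite on primitive classes and positive on $\eta_0$. Everything else is elementary calculus on the open cone.
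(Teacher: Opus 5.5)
Your proof is correct and follows the same route as the paper: the first variation vanishes at the interior maximum $[\omega_0]$ of $\eta\mapsto\int_X c_1(X)\wedge\eta^{n-1}$ on the open Kähler cone, and then Poincaré duality between $H^{1,1}$ and $H^{n-1,n-1}$ combined with Hard Lefschetz for $[\omega_0]^{n-2}$ gives $c_1(X)=0$. You also treat $n=1$ separately, where the derivative carries no information and one uses $\dim H^{1,1}=1$ instead; the paper's argument passes over this case.
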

\begin{proof}
    By assumption, the function $\omega\in\mathcal{K}(X)\mapsto\int_X c_1(X)\wedge\omega^{n-1}$ attains its maximum at $\omega_0$.
    
    By differentiation we get that for any $\eta\in H^{1,1}_\mathbb{R}(X)$ we have:
    \begin{equation}\label{diffrel}
        \int_Xc_1(X)\wedge \eta\wedge\omega_0^{n-2}=0.
    \end{equation}
    
   {Now, by Poincaré duality in compact Kähler context, 
    the map $H^{n-1,n-1}(X)\times H^{1,1}(X) \to \mathbb{R}, ([\alpha],[\beta])\mapsto \int_X \alpha\wedge\beta$  
    is non-degenerate, hence $c_1(X)\wedge\omega_0^{n-2} =0$ in $H^{n-1,n-1}(X)$. 
    By the Hard Lefschetz theorem \cite[\S 0.7]{GrHa}, this in turn implies 
    $c_1(X) = 0$, as claimed. 
    }
\end{proof}
\subsection{Systoles of Kähler manifolds}

The following observation is classic and follows from Wirtinger's inequality and the Kähler condition:
\begin{proposition}\label{kaehlercalib}
    Les $(X,\omega)$ be a Kähler manifold and $Y\subset X$ be a compact complex submanifold. Then for any smooth $\Sigma\in [Y]$, $\mathcal{A}(Y)\leq\mathcal{A}(\Sigma)$.
\end{proposition}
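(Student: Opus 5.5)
The plan is the standard calibration argument via Wirtinger's inequality. Let $k=\dim_\mathbb{C}Y$; the statement is about $2k$-dimensional area, and $\Sigma$ is any smooth closed oriented $2k$-dimensional submanifold (or smooth cycle) homologous to $Y$. Consider the closed real $2k$-form $\varphi=\frac{\omega^k}{k!}$. It is closed because the Kähler condition gives $d\omega=0$, hence $d\omega^k=0$.

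\textbf{Step 1: $\varphi$ is a calibration.} Wirtinger's inequality says that for every oriented real $2k$-plane $P\subset T_xX$ with oriented unit volume element $\xi_P$, we have $\varphi(\xi_P)\le 1$. Equality holds exactly when $P$ is a complex subspace with its canonical orientation. I would take this inequality as the classical pointwise linear-algebra fact. If a proof is wanted, put $\omega|_P$ in normal form $\sum\lambda_i e_{2i-1}^*\wedge e_{2i}^*$ with respect to an orthonormal basis of $P$. Each $|\lambda_i|\le 1$ because $|\omega(u,v)|=|g(Ju,v)|\le|u||v|$. Then $\varphi(\xi_P)=\prod\lambda_i\le 1$, with equality iff $Je_{2i-1}=e_{2i}$ for all $i$, i.e. $P$ is complex. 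This is the only real content of the proof, but it is classical.

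\textbf{Step 2: area of the complex submanifold.} Since $Y$ is a complex submanifold with its complex orientation, each tangent plane is complex. So $\varphi$ restricts to the Riemannian volume form of $Y$, and
\[\mathcal{A}(Y)=\int_Y\varphi.\]

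\textbf{Step 3: cohomological invariance and comparison.} Since $[\Sigma]=[Y]$ in $H_{2k}(X;\mathbb{R})$ and $\varphi$ is closed, Stokes' theorem (equivalently, the pairing between de Rham cohomology and homology) gives $\int_\Sigma\varphi=\int_Y\varphi$. Applying Step 1 pointwise along $\Sigma$,
\[\int_\Sigma\varphi\le\int_\Sigma d\mathrm{vol}_\Sigma=\mathcal{A}(\Sigma).\]
Chaining the three relations gives $\mathcal{A}(Y)\le\mathcal{A}(\Sigma)$.

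Note that $X$ need not be compact here, since only $Y$ and $\Sigma$ are integrated over. If $\Sigma$ is only an immersed or singular smooth representative, the same argument applies to the smooth chain, using the pointwise inequality on its regular part.

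For the application in the paper, take $k=1$. Then $\sys_{[Y]}(\omega)=\int_Y\omega$, which depends only on $[\omega]$; this is how the proposition is used for the lines $L_k$ and for $\Sigma_g$.
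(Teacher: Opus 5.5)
Your proof is correct and follows the same calibration argument as the paper. Wirtinger's inequality bounds $\int_\Sigma \omega^k/k!$ above by $\mathcal{A}(\Sigma)$, the complex submanifold $Y$ gives equality, and closedness of $\omega^k$ with Stokes' theorem identifies $\int_\Sigma\omega^k/k!$ with $\int_Y\omega^k/k!$. Your Step 2 correctly concludes $\int_Y\varphi=\mathcal{A}(Y)$, where the paper's final line has the typo $\mathcal{A}(\Sigma)$.
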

\begin{proof}
    Let $v_\Sigma$ be the area form of $\Sigma$, then Wirtinger inequality implies that for any orthonormal basis $(e_1,\dots,e_{2k})$ of $T_p\Sigma$ :
    \[\omega^k(e_1,\dots,e_{2k})\leq k! v_\Sigma (e_1,\dots,e_{2k})\]
    with equality if and only if $\Sigma$ is a complex submanifold of $X$. This implies that:
    \[\int_\Sigma \frac{\omega^k}{k!}\leq\mathcal{A}(\Sigma).\]
    Since $\Sigma\in [Y]$ and $d(\omega^k)=0$, Stokes theorem implies that:
    \[\int_\Sigma \frac{\omega^k}{k!}=\int_Y \frac{\omega^k}{k!}.\]
    Since $Y$ is a complex submanifold, $\int_Y \frac{\omega^k}{k!}=\mathcal{A}(\Sigma)$.
\end{proof}

In particular this implies that $\sys_{[Y]}(\omega)=\mathcal{A}(Y)$ for any compact complex submanifold $Y\subset X$.

Moreover, if $\tilde\omega$ is another Kähler metric such that $[\omega]=[\tilde\omega]\in H^{1,1}(X)$ then $\int_Y \frac{\omega^k}{k!}=\int_Y \frac{\tilde\omega^k}{k!}$, this implies that the systoles of the homology classes of complex submanifolds only depend on the cohomology class of $\omega$ in $H^{1,1}(X)$.

\bibliographystyle{alpha}

\bibliography{kaehler}

\end{document}